\tikzset{negated/.style={
        decoration={markings,
            mark= at position 0.5 with {
                \node[transform shape] (tempnode) {$\backslash$};
            }
        },
        postaction={decorate}
    }
}
\newcommand{\A}{\boldsymbol{A}}
\newcommand{\B}{\boldsymbol{B}}
\newcommand{\C}{\boldsymbol{C}}
\newcommand{\D}{\boldsymbol{D}}
\newcommand{\RR}{\mathbb{R}}
\newcommand{\NN}{\mathbb{N}}
\newcommand{\ZZ}{\mathbb{Z}}
\newcommand{\cb}{\mathbf{c}}
\newcommand{\ab}{\mathbf{a}}
\newcommand{\vb}{\mathbf{v}}
\newtheorem{thm}{Theorem}
\newtheorem{lem}[thm]{Lemma}
\newtheorem{cor}[thm]{Corollary}
\newtheorem{Def}[thm]{Definition}
\newtheorem{ex}[thm]{Example}
\date{}
\title{A note on spectral properties of Hermite subdivision operators}
\author{
{Caroline Moosm\"uller}\thanks{Department of Chemical and Biomolecular
  Engineering, Johns Hopkins University, 3400 North Charles Street,
  Baltimore, MD 21218, USA. \texttt{cmoosmueller@jhu.edu}}
  }
\begin{document}

\maketitle

\begin{abstract}
In this paper we study the connection between the spectral condition of an Hermite subdivision operator and polynomial
reproduction properties of the associated subdivision scheme. While it is known that in general the spectral condition
does not imply the reproduction of polynomials, we here prove that a special spectral condition (defined by shifted monomials)
is actually equivalent to the reproduction of polynomials. We further put into evidence that the sum rule of order $\ell>d$ associated
with an Hermite subdivision operator of order $d$ does not imply that the spectral condition of order $\ell$ is satisfied, while it is
known that these two concepts are equivalent in the case $\ell=d$.

  \par\smallskip\noindent
  {{\bf Keywords}: Hermite subdivision operators; spectral properties; polynomial reproduction; sum rules} 
  \par\smallskip\noindent
  {{\bf MSC}: 65D15; 41A15; 65D17} 
\end{abstract}
\section{Introduction}
Hermite subdivision schemes are iterative refinement algorithms, which, applied to discrete vector-valued data, aim at
producing smooth functions and their derivatives in the limit. They are defined by iteratively applying a subdivision operator
(and a normalization matrix due to the interpretation of the data as function values and consecutive derivatives)
to discrete vector-valued data \cite{dubuc06,dubuc05,dyn95,dyn99,merrien92}.
Therefore, properties such as convergence,
regularity of the limit curve, approximation order, etc.\ are strongly connected to properties of
the subdivision operator \cite{conti16,conti17,dubuc09,merrien12,moosmueller18b,yu05}.

Hermite subdivision schemes find applications in geometric modeling (if derivatives are of interest) \cite{dyn02,xue05,xue06},
in approximation theory (linear and manifold-valued) \cite{han06,merrien92,moosmueller16,moosmueller17}, and
they can be used for the construction of multiwavelets \cite{cotronei18,cotronei17,jia06} and the analysis of biomedical images \cite{conti15,uhlmann14}.

When studying Hermite subdivision operators it is commonly assumed that 
the spectral condition \cite{dubuc09,merrien12} is satisfied. This condition requires certain polynomial eigenvectors of the operator.
While the spectral condition is very useful for factorizing the subdivision operator (and thus for obtaining convergence results)
\cite{merrien12}, it has recently been proved that it is not necessary for convergence of the associated scheme \cite{merrien18}.
The paper \cite{merrien18} also shows how to relax the spectral condition, but still retain factorization and convergence results.
Furthermore, we recently showed that in general the spectral condition of order $d$ does not imply the reproduction of polynomials up to degree $d$
of the associated scheme \cite{moosmueller18b}. These rather surprising results are the motivation for this paper.

We aim at shedding some light on the relationship between the spectral condition and polynomial reproduction \cite{conti14,dyn99,jeong17},
polynomial generation \cite{dubuc09}, and the sum rules \cite{han03b,han05}.  



Reproduction of polynomials of degree $\ell$ is a desired property of a convergent (Hermite) scheme,
as it leads to approximation order $\ell+1$,
see \cite{conti11,dyn06,dyn02,jeong17,yu01}.
While it is known that for interpolatory Hermite schemes the reproduction of polynomials is necessary for convergence \cite{dyn99,yu05},
this is in general not true for noninterpolatory schemes.

The polynomial reproduction order of an Hermite scheme can be checked, for example, with the algebraic
conditions for the mask provided in \cite{conti18}.
We prove in the first part of this paper that a special spectral condition (namely, the spectral condition with shifted monomials
of the form $\tfrac{(x+\tau)^k}{k!}$) is equivalent to the reproduction of polynomials. This provides an easy way of checking the polynomial
reproduction order (and thus the approximation order) of an Hermite scheme.

In the second part of this paper we study the connection between the spectral condition and the sum rules of \cite{han03b,han05}.
In \cite{dubuc09} it is proved that these concepts are equivalent in the \emph{minimal} regime, meaning that both the spectral condition
and the sum rules are satisfied up to order $d$, where $d+1$ is the dimension of the mask coefficients.
We consider two examples with mask consisting of $(2\times 2)$ matrices from \cite{han05} which satisfy the sum rules of order $7$ and
prove that the spectral condition is only satisfied up to order $2$. This implies that
in general the sum rules of order $\ell > d$ do not imply the spectral condition of order $\ell$.
\section{Preliminaries}
Throughout this text let $d \geq 1$. 
We study the space of $\RR^{d+1}$-valued sequences $\cb=\left(\cb_j: j \in \ZZ\right)$, which is denoted by $\ell(\ZZ)^{d+1}$
and the space $\ell(\ZZ)^{d+1}_{\infty}$ of bounded sequences with respect to the norm
\begin{equation*}
\|\cb\|_{\infty}:=\sup_{j \in \ZZ}|\cb_j|_{\infty},
\end{equation*}
where $|\cdot|_{\infty}$ is the infinity norm on $\RR^{d+1}$.
We also consider spaces of matrix-valued sequences $\A=\left(\A_j: j \in \ZZ \right)$, denoted by $\ell(\ZZ)^{(d+1)\times (d+1)}$
and the space $\ell(\ZZ)^{(d+1)\times (d+1)}_{\infty}$ of bounded sequences with respect to the norm:
\begin{equation*}
\|\A\|_{\infty}:=\sup_{j \in \ZZ}|\A_j|_{\infty},
\end{equation*}
where $|\cdot|_{\infty}$ is the operator norm for matrices in $\RR^{(d+1)\times (d+1)}$ induced by the infinity norm on $\RR^{d+1}$.
We further define the spaces $\ell(\ZZ)^{d+1}_{0}$ and $\ell(\ZZ)^{(d+1)\times (d+1)}_{0}$, which consist of vector and matrix sequences,
respectively, of finite support.
\begin{Def}[Subdivision operator]\label{def:subd_op}
A \emph{subdivision operator of order $d$} with \emph{mask} $\A \in \ell(\ZZ)^{(d+1)\times (d+1)}_{0}$ is the map $S_{\A}:\ell(\ZZ)^{d+1} \to \ell(\ZZ)^{d+1}$ defined by
\begin{equation}\label{eq:sdo}
\left(S_{\A}\cb\right)_j=\sum_{k \in \ZZ}\A_{j-2k}\cb_j, \quad \cb \in \ell(\ZZ)^{d+1},\, j \in \ZZ.
\end{equation}
\end{Def}
\begin{Def}[Hermite subdivision scheme]\label{def:hermite}
Let $S_{\A}$ be a subdivision operator of order $d$. An \emph{Hermite subdivision scheme with operator $S_{\A}$} is the iterative procedure of constructing vector-valued sequences by
\begin{equation}\label{eq:Hermite_sds}
\D^{n+1}\cb^{[n+1]}=S_{\A}\D^{n}\cb^{[n]},\quad n\in \NN,
\end{equation}
from initial data $\cb^{[0]} \in \ell(\ZZ)^{d+1}$. Here $\D$ denotes the diagonal matrix $\D=\operatorname{diag}\left(1, 2^{-1},\ldots,2^{-d} \right)$.
\end{Def}
Note that (\ref{eq:Hermite_sds}) is equivalent to
\begin{equation}\label{eq:rewrite}
\cb^{[n]}=\D^{-n}S_{\A}^n\cb^{[0]},\quad n\in \NN.
\end{equation}

An Hermite subdivision scheme is called \emph{interpolatory} if $\cb^{[n+1]}_{2j}=\cb^{[n]}_{j},\, n\in \NN,\, j\in \ZZ$.
This is equivalent to the following conditions on the mask $\A$: $\A_0=\D$ and $\A_{2j}=0,\, j\in \ZZ \backslash \{0\}$.

We study parametrizations of Hermite subdivision schemes \cite{conti18,conti14,jeong17}, which are 
characterized by a parameter $\tau \in\RR$. We say that an Hermite subdivision scheme is parametrized by a parameter $\tau \in \RR$
if we consider the vector $\cb^{[n]}_j$ to be attached to the value $\tfrac{j+\tau}{2^n}$, $j\in \ZZ,\, n\in \NN$.
The choice $\tau=0$ is called the \emph{primal parametrization} and $\tau=-1/2$ is called the \emph{dual parametrization}. 

\begin{Def}[Convergence of Hermite subdivision schemes]\label{def:convergent_hermite}
An Hermite subdivision scheme with subdivision operator $S_{\A}$ of order $d$ is $C^{\ell}$-convergent, $\ell \geq d$, with parametrization $\tau$, if for every input data $\cb^{[0]}\in \ell(\ZZ)^{d+1}$, there exists a uniformly continuous function $\Phi=[\varphi_j]_{j=0}^d: \RR \to \RR^{d+1}$ such that for every compact set $K\subset \RR$ the sequence $\cb^{[n]}$ defined by (\ref{eq:Hermite_sds}) satisfies
\begin{equation}\label{eq:Hermite_convergence}
\lim_{n \to \infty}\sup_{j\in \ZZ\cap K}|\cb^{[n]}_j-\Phi\left(2^{-n}(j+\tau)\right)|_{\infty}=0
\end{equation}
and $\varphi_0 \in C^d(\RR)$ with $\tfrac{d^j\varphi_0}{dx^j}=\varphi_{j}, j=1,\ldots,d$.
Furthermore, we request that there exists at least one $\cb^{[0]} \in \ell(\ZZ)^{d+1}$ such that $\Phi \neq 0$.
\end{Def}
Denote by $\Pi_k$ the polynomials of degree $\leq k$ with real coefficients. Let $p \in \Pi_k$ and define $\vb_p \in \ell(\ZZ)^{d+1}$ by
\begin{equation}
(\vb_p)_j=[p(j),p'(j),\ldots,p^{(d)}(j)]^T,\quad j\in \ZZ.
\end{equation}
\begin{Def}[Spectral condition]\label{def:spectral}
A subdivision operator $S_{\A}$ of order $d$ satisfies the \emph{spectral condition of order $\ell$}, $\ell \geq d$, if there exist polynomials
$p_k \in \Pi_k$, $k=0,\ldots, \ell$, where $p_k$ has leading coefficient $1/k!$, such that
\begin{equation}\label{eq:spectral}
 S_{\A}\vb_{p_k}
= 2^{-k}\vb_{p_k}.
\end{equation}
A subdivision operator of order $d$ satisfying the spectral condition of order $\ell$ is called a \emph{subdivision operator of spectral order $\ell$}. The polynomials $p_k, k=0,\ldots, \ell$, are named \emph{spectral polynomials of $S_{\A}$}.
A subdivision operator of order $d$ is of \emph{minimal spectral order} if it is of spectral order $d$, but not of spectral order $d+1$.
\end{Def}
The spectral condition was introduced by \cite{dubuc08,dubuc09}.
In \cite{dubuc08} it is also proved that the spectral polynomials are unique up to a multiplicative factor.

The spectral condition is an important property for the factorizability of an Hermite subdivision scheme  \cite{merrien12,moosmueller18b}. Nevertheless, it is known that it is not necessary for the convergence of a scheme \cite{merrien18}.

\begin{Def}[Reproduction of functions]\label{def:poly_reprod}
Let $S_{\A}$ be a subdivision operator of order $d$. Then $S_{\A}$ is said to
\emph{reproduce a function $f\in C^{d}(\RR)$ with respect to $\tau$} if for initial data $\cb^{[0]}_j=[f(j+\tau),f'(j+\tau),\ldots,f^{(d)}(j+\tau)]^T$, the iterated sequence $\cb^{[n]}$ defined by (\ref{eq:Hermite_sds}) satisfies
$\cb^{[n]}_j=[f(2^{-n}(j+\tau)),f'(2^{-n}(j+\tau)),\ldots,f^{(d)}(2^{-n}(j+\tau))]^T$, $j\in \ZZ, n\geq 1$.
\end{Def}
\begin{Def}[Generation of functions]\label{def:poly_generation}
Let $S_{\A}$ be a subdivision operator of order $d$. It is said to \emph{generate a function $f\in C^{d}(\RR)$ with respect to $\tau$}
if the associated Hermite scheme is $C^{d}$-convergent and for some initial data $\cb^{[0]}$, the iterated sequence $\cb^{[n]}$ defined by (\ref{eq:Hermite_sds}) converges to $[f,f',\ldots,f^{(d)}]^T$ with parametrization $\tau$.
\end{Def}
\begin{Def}
Let $S_{\A}$ be a subdivision operator of order $d$. Let $\ell\geq d$ and let $\tau \in \RR$.
We say that $S_{\A}$ reproduces (generates) $\Pi_{\ell}$ w.r.t.\ $\tau$
if it reproduces (generates) all polynomials in $\Pi_{\ell}$ w.r.t.\ $\tau$.
\end{Def}
Note that in order to show reproduction (generation) of $\Pi_{\ell}$, due to linearity, it is enough to prove reproduction (generation)
of a basis of $\Pi_{\ell}$.

Polynomial reproduction and generation of Hermite subdivision schemes are studied in e.g.\ \cite{conti16,conti14,dubuc09,dyn99,jeong17}. Now we introduce a special sum rule from \cite{han03b,han05}:
\begin{Def}[Special sum rule]\label{def:sum_rule}
Let $S_{\A}$ be a subdivision operator of order $d$. It is said to satisfy a \emph{special sum rule of order $\ell \geq d$} if there exists $y\in \ell(\ZZ)^{d+1}$ such that
\begin{equation}
\frac{d^j}{dx^j}\left(\hat{y}(2\cdot)\hat{\A}(\cdot)\right)(0)=\frac{d^j}{dx^j}\hat{y}\,(0), \,
\frac{d^j}{dx^j}\left(\hat{y}(2\cdot)\hat{\A}(\cdot)\right)(\pi)=0,
\end{equation}
for $j=0,\ldots,\ell$ and
\begin{equation}
\frac{(-1)^j}{j!}\frac{d^j}{dx^j}\hat{y}\,(0)=e_{j}
\end{equation}
for $j=0,\ldots,d$. Here $e_{j}$ is the $(j+1)$-st unit vector in $\RR^{d+1}$, $j=0,\ldots,d$.

$S_{\A}$ is said to satisfy a \emph{minimal special sum rule} if it satisfies a special sum rule of order $d$, but not of order $d+1$.
\end{Def}
\section{Spectral condition and polynomial reproduction}\label{sec:polynomial}
In this section we study the relationship between the spectral condition (Definition \ref{def:spectral}), polynomial reproduction (Definition \ref{def:poly_reprod}) and polynomial generation (Definition \ref{def:poly_generation}).
The main result (Theorem \ref{thm:main}) shows that the reproduction of polynomials is equivalent to a special spectral condition. It also shows that we actually have an equivalence in \cite[Proposition 1]{conti14}.
We summarize the findings of this section in Figure \ref{fig:general_case}.
\begin{figure}
\centering
\begin{tikzpicture}
  \matrix (m) [matrix of math nodes,row sep=3em,column sep=4em,minimum width=2em]
  {
    \text{$S_{\A}$ reproduces $\Pi_{\ell}$ w.r.t.\ $\tau$} \\ 
    \text{ $S_{\A}$ satisfies the spectral condition with polynomials $(x+\tau)^k/k!, \, k=0,\ldots, \ell$.} \\
       \text{$S_{\A}$ generates $\Pi_{\ell}$ w.r.t.\ $\tau$}  \\};
  \path[-stealth]
    (m-1-1) edge [<->,double] (m-2-1)
    (m-2-1) edge [->,double] (m-3-1);
\end{tikzpicture}
\caption{This is a summary of the results presented in Theorem \ref{thm:main} and Lemma \ref{lem:generation}.
Here $S_{\A}$ is an Hermite subdivision operator of order $d$, $\ell\geq d$, and $\tau \in \RR$ is 
the parameter with respect to which we parametrize the
Hermite scheme associated to $S_{\A}$.}
\label{fig:general_case}
\end{figure}
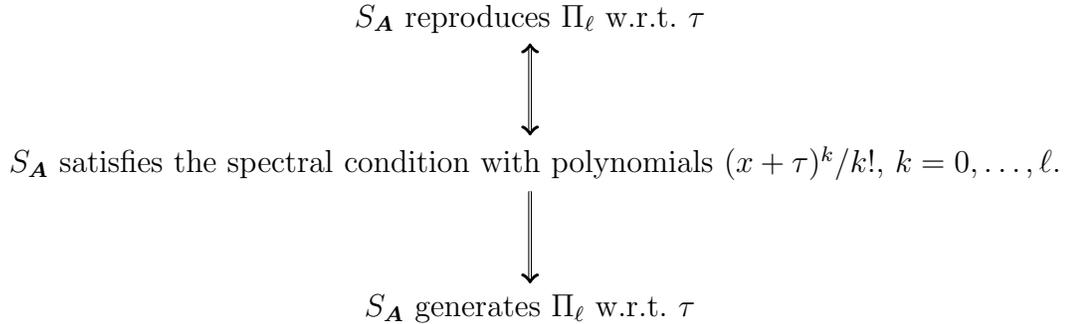

\begin{thm}\label{thm:main}
Let $S_{\A}$ be a subdivision operator of order $d$. Let $\ell \geq d$ and let $\tau \in \RR$.
Then we have: $S_{\A}$ satisfies the spectral condition with spectral polynomials $(x+\tau)^k/k!, k=0,\ldots,\ell$,
if and only if it reproduces $\Pi_{\ell}$ with respect to the parameter $\tau$.
\end{thm}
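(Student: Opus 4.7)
The plan is to unwind both sides of the equivalence into identities about the action of $S_{\A}$ on the sequences $\vb_{p_k}$ with $p_k(x)=(x+\tau)^k/k!$, and observe that they are literally the same statement.

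First I fix the basis $q_k(x) = x^k/k!$, $k = 0,\ldots,\ell$, of $\Pi_{\ell}$. Because the prescribed initial data in Definition \ref{def:poly_reprod} and the prescribed iterates depend linearly on the function being reproduced, and $S_{\A}$ itself is linear, reproduction of $\Pi_{\ell}$ with respect to $\tau$ reduces to reproduction of each $q_k$ individually.

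Next, since $q_k^{(i)}(x) = x^{k-i}/(k-i)!$ (read as $0$ for $k<i$), the prescribed initial data $\cb^{[0]}_j=[q_k(j+\tau),\ldots,q_k^{(d)}(j+\tau)]^T$ agrees componentwise with $(\vb_{p_k})_j$, so $\cb^{[0]} = \vb_{p_k}$. The prescribed iterate $\cb^{[n]}_j=[q_k(2^{-n}(j+\tau)),\ldots,q_k^{(d)}(2^{-n}(j+\tau))]^T$ has $i$-th entry
\begin{equation*}
\frac{\bigl(2^{-n}(j+\tau)\bigr)^{k-i}}{(k-i)!}=2^{-n(k-i)}\,\frac{(j+\tau)^{k-i}}{(k-i)!}.
\end{equation*}
Since $\D^{-n}=\operatorname{diag}(1,2^n,\ldots,2^{dn})$, this is precisely the $i$-th entry of $2^{-nk}\D^{-n}\vb_{p_k}$, so the desired iterate is $2^{-nk}\D^{-n}\vb_{p_k}$.

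Combining these observations with the recursion (\ref{eq:rewrite}) gives $\cb^{[n]} = \D^{-n}S_{\A}^n\vb_{p_k}$, and reproduction of $q_k$ therefore collapses to the single identity
\begin{equation*}
S_{\A}^n\vb_{p_k} = 2^{-nk}\vb_{p_k}, \qquad n\geq 1.
\end{equation*}
The case $n=1$ is exactly the spectral condition for $p_k$; conversely the $n=1$ identity propagates to all $n$ by the one-line induction $S_{\A}^{n+1}\vb_{p_k} = 2^{-nk}S_{\A}\vb_{p_k} = 2^{-(n+1)k}\vb_{p_k}$. Running this chain of equivalences for each $k=0,\ldots,\ell$ closes the argument.

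I do not expect a genuine obstacle here; the proof is essentially a careful unwinding of the definitions. The only subtlety is the bookkeeping that reconciles the diagonal factor $\D^{-n}$ coming from the Hermite recursion, the $2^{-nk}$ factor coming from the dilation $2^{-n}(j+\tau)$ in the reproduction definition, and the $2^{-k}$ eigenvalue in the spectral condition; once this is laid out componentwise the equivalence is immediate.
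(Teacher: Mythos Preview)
Your proof is correct and follows essentially the same approach as the paper: both reduce to the basis $q_k(x)=x^k/k!$, identify the prescribed initial data with $\vb_{p_k}$, use the componentwise scaling identity $q_k^{(i)}(2^{-n}(j+\tau))=2^{-n(k-i)}q_k^{(i)}(j+\tau)$ to match the $\D^{-n}$ factor, and relate reproduction to the iterated spectral relation $S_{\A}^n\vb_{p_k}=2^{-nk}\vb_{p_k}$. The only cosmetic difference is that the paper writes out the two directions separately, while you package them as a single chain of equivalences.
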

\begin{proof}
Let $q_k(x)=x^k/k!$ and $p_{\tau,k}(x)=q_k(x+\tau), k=0,\ldots, \ell$.

$(\Rightarrow)$ The set ${\mathcal{B}}=\{ q_k, k=0,\ldots,\ell \}$ 
is a basis of $\Pi_{\ell}$, hence if $\mathcal{B}$ is reproduced by the Hermite scheme, then due to linearity, all of $\Pi_{\ell}$ is reproduced. 

By assumption, the spectral condition with spectral polynomials $p_{\tau,k},\, k=0,\ldots,\ell$,
is satisfied and we now prove that ${\cal B}$
is reproduced with parametrization $\tau$. Consider initial data 
\begin{equation}\label{eq:inital}
(\cb^{[0]}_k)_j=[q_k(j+\tau),\ldots,q_k^{(d)}(j+\tau)]^T=(\vb_{p_{\tau,k}})_j, \quad k=0,\ldots, \ell, j\in \ZZ.
\end{equation}
Since
\begin{equation*}
q_k^{(m)}(x)= \begin{cases}
				\frac{x^{k-m}}{(k-m)!} & m \leq k \\
                0 & m>k
                \end{cases},
\end{equation*}
for $n\in \NN$ and $m \leq k$ we obtain
\begin{equation}\label{eq:deriv}
q_k^{(m)}(2^{-n}(j+\tau))=2^{-n(k-m)}\frac{(j+\tau)^{k-m}}{(k-m)!}=2^{-n(k-m)}q_k^{(m)}(j+\tau).
\end{equation}
Note that (\ref{eq:deriv}) is also satisfied if $m>k$, because in this case both sides of the equation equal $0$.
The spectral condition with spectral polynomials $p_{\tau,k}, k=0,\ldots,\ell,$ gives
\begin{equation*}
S_{\A}\vb_{p_{\tau,k}}=2^{-k}\vb_{p_{\tau,k}}, \quad k=0,\ldots,\ell,
\end{equation*}
and by iteration
\begin{equation*}
S_{\A}^n\vb_{p_{\tau,k}}=2^{-kn}\vb_{p_{\tau,k}}, \quad n\in \NN,\, k=0,\ldots,\ell.
\end{equation*}
Together with (\ref{eq:rewrite}) this implies 
\begin{align*}
\cb^{[n]}_k=\D^{-n}S_{\A}^n\cb^{[0]}_k
=\D^{-n}S_{\A}^n\vb_{p_{\tau,k}}=\D^{-n}2^{-kn}\vb_{p_{\tau,k}}
=\D^{-n}2^{-kn}\cb^{[0]}_k,
\end{align*}
for $n\in \NN$ and $k=0,\ldots,\ell$.
Thus using the relation (\ref{eq:deriv}), for $m=0,\ldots,d$, the $m$-th component of $\cb^{[n]}$ is given by
\begin{align*}
(\cb^{[n]}_k)^{m}_j=2^{-n(k-m)}(\cb^{[0]}_k)^m_j=2^{-n(k-m)}q_k^{(m)}(j+\tau)=q_k^{(m)}(2^{-n}(j+\tau)),
\end{align*}
for $n\in \NN,\, k=0,\ldots, \ell$. This concludes the first part of the proof.

$(\Leftarrow)$ For $\tau=0$, this is proved in \cite{conti14}.
We provide the proof for general $\tau$. We assume that $\Pi_{\ell}$ is reproduced with parametrization $\tau$.
Therefore, in particular, $q_{k}, k=0,\ldots,\ell,$ are reproduced. As in (\ref{eq:inital}), let $\cb^{[0]}_k=\vb_{p_{\tau,k}}$.
Then polynomial reproduction implies
\begin{equation*}
(\cb^{[n]}_k)_j=
[q_k(2^{-n}(j+\tau)),q_k'(2^{-n}(j+\tau)),\ldots,q_k^{(d)}(2^{-n}(j+\tau))]^T,
\end{equation*}
for $n\in \NN,\, j\in \ZZ$ and $k=0,\ldots,\ell$.
In particular, using this formula for $n=1$ and together with (\ref{eq:deriv}) we get
\begin{align*}
(\cb^{[1]}_k)_j&=
[q_k(2^{-1}(j+\tau)),q_k'(2^{-1}(j+\tau)),\ldots,q_k^{(d)}(2^{-1}(j+\tau))]^T\\
&=[2^{-k}q_k(j+\tau),2^{-(k-1)}q_k'(j+\tau),\ldots,2^{-(k-d)}\,q_k^{(d)}(j+\tau)]^T\\
&=\D^{-1}2^{-k}[q_k(j+\tau),q_k'(j+\tau),\ldots,q_k^{(d)}(j+\tau)]^T\\
&=\D^{-1}2^{-k}(\vb_{p_{\tau,k}})_j.
\end{align*}
This, together with (\ref{eq:rewrite}), implies
\begin{equation}
S_{\A}\vb_{p_{\tau,k}}=S_{\A}\cb^{[0]}_k=\D\cb^{[1]}_k=2^{-k}\vb_{p_{\tau,k}},
\end{equation}
which concludes the proof.
\end{proof}
Theorem \ref{thm:main} has some interesting implications:
\begin{enumerate}
\item \label{en:1} Theorem \ref{thm:main} shows that polynomial reproduction is equivalent to the spectral
condition with very special spectral polynomials. Due to this reason, it can happen that the spectral condition
of order $\ell$ is satisfied, but polynomials up to degree $\ell$ are not reproduced: In \cite{moosmueller18b}
we study the Hermite scheme $H_1$ (with $\theta=1/32$) by \cite{jeong17} and show that it satisfies the spectral condition of order $4$ with spectral
polynomials $1,x,\tfrac{x^2}{2!},\tfrac{x^3}{3!},\tfrac{x^4}{4!}+\tfrac{1}{360}$, but it does not satisfy the spectral
condition with $4$-th spectral polynomial given by $\tfrac{x^4}{4!}$. Therefore, it reproduces polynomials up to degree $3$ with primal
parametrization, but it does not reproduce all polynomials of degree $4$.

This is quite an interesting example. Since we have the spectral condition up to order $4$,
factorizations of the subdivision operator up to order $4$ are possible, see \cite{jeong17,moosmueller18b}.
It can even be proved that this scheme produces $C^4$ limits \cite{jeong17,moosmueller18b}, even though it only reproduces polynomials up to degree $3$.
This implies that polynomial reproduction is not necessary for the convergence and regularity of Hermite schemes
(which has been known before, see e.g.\ the examples in \cite{jeong17}),
even though this is known to be the case for interpolatory schemes \cite{dyn99}.
\item If the spectral condition of order $1$ is satisfied with first spectral polynomial given by $p_{\tau,1}(x)=x+\tau$,
then $\tau$ determines the parametrization with respect to which polynomials up to degree $1$ are reproduced.
In this sense, in the case $d=\ell=1$, \emph{any} spectral condition is equivalent to the reproduction of polynomials up to degree $1$.
\item Theorem \ref{thm:main} also explains why the algebraic conditions equivalent to the reproduction of polynomials up to order $1$ derived in \cite{conti18} are the same as the algebraic conditions equivalent to the spectral condition of order $1$ derived in \cite{moosmueller18}.
\item In the case $d=1$ and $d=2$ and $\ell\geq d$ the algebraic conditions equivalent to the reproduction of polynomials up to order $\ell$ of \cite{conti18} now also describe the spectral condition of order $\ell$ with spectral polynomials $(x+\tau)^k/k!,\, k=0,\ldots,\ell$.
\end{enumerate}
The following Lemma is an extension of \cite[Theorem 8]{dubuc09}. By adapting their proof we obtain
\begin{lem}\label{lem:generation}
Let $S_{\A}$ be a subdivision operator of order $d$ and let the Hermite scheme associated with $S_{\A}$
be $C^d$-convergent with parametrization $\tau$.
If $S_{\A}$ satisfies the spectral condition of order $\ell$, $\ell\geq d$, then $S_{\A}$ generates $\Pi_{\ell}$ w.r.t.\ $\tau$.
\end{lem}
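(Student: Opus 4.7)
The plan is to feed each spectral polynomial into the scheme as initial data, use the eigenvector relation from the spectral condition to produce the iterates explicitly, and then read off the generated polynomial from the $C^d$-convergence hypothesis. Fix $k \in \{0, \ldots, \ell\}$, let $p_k$ be the $k$-th spectral polynomial (with leading coefficient $1/k!$), and take $\cb^{[0]} = \vb_{p_k}$. Iterating $S_{\A}\vb_{p_k} = 2^{-k}\vb_{p_k}$ yields $S_{\A}^n \vb_{p_k} = 2^{-nk}\vb_{p_k}$, and combining this with (\ref{eq:rewrite}) and $\D^{-n} = \operatorname{diag}(1, 2^n, \ldots, 2^{dn})$ gives the explicit formula
\[
(\cb^{[n]}_j)^m = 2^{-n(k-m)}\, p_k^{(m)}(j), \qquad m = 0, \ldots, d,\ j \in \ZZ,\ n \in \NN.
\]

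Next I would invoke $C^d$-convergence with parametrization $\tau$: there exists a uniformly continuous limit $\Phi = [\varphi_0, \ldots, \varphi_d]^T$ (with $\varphi_m = \varphi_0^{(m)}$) such that $\cb^{[n]}_j$ approximates $\Phi(2^{-n}(j+\tau))$. For any $x \in \RR$, selecting $j_n \in \ZZ$ so that $2^{-n}(j_n+\tau) \to x$ (e.g.\ the nearest integer to $2^n x - \tau$), the displayed identity implies
\[
\varphi_m(x) = \lim_{n \to \infty} 2^{-n(k-m)}\, p_k^{(m)}(j_n).
\]

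The key computation is then to identify this limit. For $m > k$, $p_k^{(m)} \equiv 0$, so $\varphi_m \equiv 0$. For $m \leq k$, the leading term of $p_k^{(m)}$ is $y^{k-m}/(k-m)!$; substituting $j_n = 2^n x - \tau + O(1)$ and multiplying by $2^{-n(k-m)}$, every subleading term of $p_k^{(m)}$ contributes a factor $2^{-ns}$ with $s > 0$ and vanishes, while the leading contribution reduces to $(x - (\tau + O(1))/2^n)^{k-m}/(k-m)! \to x^{k-m}/(k-m)!$. Setting $q_k(x) = x^k/k!$, this identifies $\Phi = [q_k, q_k', \ldots, q_k^{(d)}]^T$, i.e.\ $S_{\A}$ generates $q_k$ with respect to $\tau$. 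Since $\{q_k : k = 0, \ldots, \ell\}$ is a basis of $\Pi_\ell$, linearity upgrades this to generation of all of $\Pi_\ell$ w.r.t.\ $\tau$.

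The main obstacle I anticipate is the asymptotic bookkeeping in the third step: one must verify that the $\tau$-shift and every subleading coefficient of $p_k$ (which need not equal $q_k$ in general) disappear after the scaling by $2^{-n(k-m)}$. This is routine but needs care. Otherwise the argument is a clean chase of the eigenvector structure through the convergence definition, in the spirit of \cite[Theorem 8]{dubuc09}.
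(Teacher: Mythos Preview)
Your proposal is correct and follows essentially the same route as the paper: start from $\cb^{[0]}=\vb_{p_k}$, iterate the spectral relation to obtain $(\cb^{[n]}_j)^m=2^{-n(k-m)}p_k^{(m)}(j)$, show that only the leading term $x^{k-m}/(k-m)!$ survives the rescaling, and conclude generation of the basis $\{q_k\}$ hence of $\Pi_\ell$. The only cosmetic difference is that the paper does not first invoke the abstract $C^d$-convergence hypothesis to produce a limit $\Phi$ and then identify it via sequences $j_n$; instead it writes $p_k$ in the shifted basis $(x+\tau)^s$ and directly estimates $|(\cb^{[n]}_k)_j-\Phi_k(2^{-n}(j+\tau))|_\infty$ as a finite sum bounded by $C\,2^{-n}$ uniformly for $j$ in compacta, which handles your ``asymptotic bookkeeping'' obstacle in one line and matches the uniform-on-compacta form of Definition~\ref{def:convergent_hermite} without any pointwise-limit detour.
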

\begin{proof}
For $k=0,\ldots,\ell$, denote by $p_{\tau,k}(x)=\sum_{s=0}^{k}c_{ks}(x+\tau)^s$, $c_{ks}\in \RR$, the spectral polynomials of $S_{\A}$.
Note that we expand $p_{\tau,k}$ in a different basis compared to Definition \ref{def:spectral}. However, it is easy to see that also in this
basis $c_{kk}=1/k!$. 
Consider input data
\begin{equation*}
(\cb^{[0]}_k)_j=
[p_{\tau,k}(j),p_{\tau,k}'(j),\ldots,p_{\tau,k}^{(d)}(j)]^T.
\end{equation*}
Since for $n\in \NN$ and $k=0,\ldots,\ell$ iterating the spectral condition gives $S_{\A}^{n}\vb_{p_{\tau,k}}=2^{-kn}\vb_{p_{\tau,k}}$,
with (\ref{eq:rewrite}) we obtain
\begin{align*}
\cb^{[n]}_k=\D^{-n}S_{\A}^{n}\cb^{[0]}_k=\D^{-n}2^{-kn}\cb^{[0]}_k.
\end{align*}
We show that the limit of $\cb^{[n]}_k$ is $q_k(x)=\tfrac{x^k}{k!}$:
Let $\Phi_k(x)=[q_k(x),\ldots, q_k^{(d)}(x)]^T, x\in \RR$, and let $M_k=\min\{d,k\}$. Then with (\ref{eq:deriv}) we obtain
\begin{align*}
|(\cb^{[n]}_k)_j-&\Phi_k(2^{-n}(j+\tau))|_{\infty}
=\max_{m=0,\ldots,d}|(\cb^{[n]}_k)_j^m-q_k^{(m)}(2^{-n}(j+\tau))|\\
&=\max_{m=0,\ldots,M_k}|2^{-n(k-m)}p_{\tau,k}^{(m)}(j)-2^{-n(k-m)}q_k^{(m)}(j+\tau)|.
\end{align*}
Now for $m=0,\ldots, M_k$, we have $q^{(m)}_k(j+\tau)=\tfrac{(j+\tau)^{k-m}}{(k-m)!}$. Furthermore, 
\begin{equation*}
p_{\tau,k}^{(m)}(j)=\sum_{s=m}^k c_{ks}s\cdots (s-m+1)(j+\tau)^{s-m}
=\sum_{s=m}^k c_{ks}s!\frac{(j+\tau)^{s-m}}{(s-m)!}.
\end{equation*}
This implies
\begin{equation*}
|2^{-n(k-m)}(p_{\tau,k}^{(m)}(j)-q^{(m)}_k(j+\tau))|\leq 2^{-n(k-m)}\sum_{s=m}^{k-1} c_{ks}s!\frac{|j+\tau|^{s-m}}{(s-m)!}.
\end{equation*}
For every $m$, the sum in the above term is bounded if $j\in \ZZ\cap K$ for a compact set $K\subset \RR$.
Thus for every compact $K\subset \RR$ we obtain
\begin{equation*}
\lim_{n\to \infty}\sup_{j \in\ZZ \cap K}|(\cb^{[n]}_k)_j-\Phi_k(2^{-n}(j+\tau))|_{\infty}=0.
\end{equation*}
Therefore the basis $\mathcal{B}=\{q_k,k=0,\ldots,\ell\}$ is generated by the scheme, which implies that all of $\Pi_{\ell}$ is generated.
\end{proof}
\subsection{Application to the de Rham transform}
We study the de Rham transform of an Hermite subdivision scheme, which has been introduced in \cite{dubuc08}. The de Rham transform is an interesting construction, as in some examples it increases the regularity of a scheme, while retaining a reasonable support size \cite{conti14}. In this section we prove that reproduction of $\Pi_{\ell}$ carries over to the de Rham transform.

Following \cite{conti14,dubuc08}, the de Rham transform of an Hermite scheme with mask $\A$ is the Hermite subdivision scheme with mask
\begin{align}\label{eq:derham_mask}
\overline{\A}_j:=\D^{-1}\sum_{m\in \ZZ}\A_{2(j-m)+1}\A_m, \quad j\in\ZZ.
\end{align}
Using notation introduced in e.g.\ \cite{sauer02}, we can rewrite (\ref{eq:derham_mask}):
\begin{align*}
\overline{\A}_j=\D^{-1}(\A\ast_2\A)_{2j+1}, \quad j\in\ZZ,
\end{align*}
where
\begin{equation*}
 (\B \ast_2 \C)_j:=S_{\B}\C_j=\sum_{m\in \ZZ}\B_{j-2m}\C_m, \quad j\in \ZZ,
\end{equation*}
with $\B,\C \in \ell(\ZZ)_0^{(d+1)\times (d+1)}$.
With the methods of \cite{dubuc08}, we now extend the results of \cite[Corollary 6]{dubuc08} and \cite[Corollary 1]{conti14} 
to general parametrizations $\tau$:
\begin{lem}\label{lem:deRham}
Let $S_{\A}$ be a subdivision operator of order $d$. Let $\ell\geq d$ and let $\tau \in \RR$.
If $S_{\A}$ satisfies the spectral condition with spectral polynomials $(x+\tau)^k/k!,\, k=0,\ldots, \ell$, then its de Rham transform $S_{\overline{\A}}$ satisfies the spectral condition with spectral polynomials $(x+2^{-1}(3\tau-1))^k/k!, \, k=0,\ldots, \ell$.
\end{lem}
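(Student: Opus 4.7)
My plan is to verify the spectral condition for $S_{\overline{\A}}$ by direct computation, after first establishing an operator-level identity that ties one step of $S_{\overline{\A}}$ to two steps of $S_{\A}$. Using $\overline{\A}_j=\D^{-1}(\A \ast_2 \A)_{2j+1}$ together with the definition of subdivision and a simple index swap (setting $r=2k+m$ in the resulting double sum), one obtains
\[
(S_{\overline{\A}}\cb)_j=\D^{-1}(S_{\A}^2\cb)_{2j+1}, \qquad j\in\ZZ,\ \cb\in \ell(\ZZ)^{d+1}.
\]
This identity is the backbone of everything that follows, and it transparently shows why a $+1$ shift in the argument appears on the $S_{\A}$ side.

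Next, writing $\tau'=(3\tau-1)/2$, the shift between the two parametrizations is $\tau'-\tau=(\tau-1)/2$, so the binomial theorem expresses $p_{\tau',k}(x)=(x+\tau')^k/k!$ as a linear combination of the polynomials $p_{\tau,s}(x)=(x+\tau)^s/s!$ for $s=0,\ldots,k$. By linearity of the map $p\mapsto\vb_p$ this expansion transfers to the Hermite vectors, and the hypothesis $S_{\A}\vb_{p_{\tau,s}}=2^{-s}\vb_{p_{\tau,s}}$ applied twice then yields a closed form for $S_{\A}^2\vb_{p_{\tau',k}}$ as a weighted sum of the $\vb_{p_{\tau,s}}$ with weights involving $((\tau-1)/2)^{k-s}/(k-s)!$ and $2^{-2s}$.

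Finally, I would plug this expression into the operator identity at index $2j+1$, apply $\D^{-1}$ componentwise (producing a factor $2^m$ in the $m$-th slot), and reorganize the resulting double sum via the substitution $t=s-m$. After extracting powers of $2$, the inner sum collapses via the binomial theorem, and the key arithmetic identity
\[
\frac{\tau-1}{2}+\frac{2j+1+\tau}{4}=\frac{2j+3\tau-1}{4}=\frac{j+\tau'}{2}
\]
reduces the $m$-th component of $(S_{\overline{\A}}\vb_{p_{\tau',k}})_j$ to $2^{-k}(j+\tau')^{k-m}/(k-m)! = 2^{-k}\,p_{\tau',k}^{(m)}(j)$, which is precisely $2^{-k}$ times the $m$-th component of $(\vb_{p_{\tau',k}})_j$. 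Thus $S_{\overline{\A}}\vb_{p_{\tau',k}}=2^{-k}\vb_{p_{\tau',k}}$ for each $k=0,\ldots,\ell$, as desired.

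The main obstacle I anticipate is the double-sum bookkeeping in the last step, and in particular verifying that the shift that emerges from combining $\tau'-\tau$ with the $+1$ offset inside $2j+1$ is exactly $\tau'=(3\tau-1)/2$; once the arithmetic identity above is in hand, the rest is routine binomial algebra. An alternative route is to invoke Theorem \ref{thm:main} to first translate the spectral hypothesis into reproduction of $\Pi_\ell$ w.r.t.\ $\tau$ for $S_{\A}$ and then transfer this to $S_{\overline{\A}}$ w.r.t.\ $\tau'$ via the same operator identity, reapplying Theorem \ref{thm:main} at the end; but since the relation between $S_{\overline{\A}}^n$ and $S_{\A}^{2n}$ does not factor transparently for $n\ge 2$, working directly at the level of the spectral eigenrelation avoids an extra induction and seems cleaner.
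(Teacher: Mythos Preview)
Your argument is correct, but it follows a genuinely different route from the paper's proof. The paper does not set up the operator identity $(S_{\overline{\A}}\cb)_j=\D^{-1}(S_{\A}^2\cb)_{2j+1}$ at all; instead it invokes the general recursion for the de Rham spectral polynomials from \cite[Theorem~5]{dubuc08} and \cite[Theorem~3]{conti14}, namely
\[
\overline{p}_k=p_k+\sum_{m=0}^{k-1}\mu_{k,m}\,\overline{p}_m,\qquad
\mu_{k,m}=-\lambda_{k,m}\frac{2^{m-k}}{2^k-2^m},
\]
where the $\lambda_{k,m}$ come from expanding $p_k(2x+1)$. It then computes $\lambda_{k,m}$ explicitly for $p_k(x)=(x+\tau)^k/k!$ and verifies by induction on $k$ that the recursion collapses to $\overline{p}_k(x)=(x+\tau')^k/k!$ with $\tau'=(3\tau-1)/2$. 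Your approach is more self-contained: you rederive the link between $S_{\overline{\A}}$ and $S_{\A}^2$ from the mask definition and then check the eigenrelation by a single binomial-sum computation, so you never need the cited recursion and avoid the inductive bookkeeping. The paper's approach, on the other hand, immediately generalizes: because it plugs into a formula valid for arbitrary spectral polynomials, it makes transparent how the de Rham transform acts on \emph{any} spectral family, not just shifted monomials.
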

\begin{proof}
In \cite[Theorem 5]{dubuc08} and \cite[Theorem 3]{conti14} it is proved that if $S_{\A}$ satisfies the spectral
condition with spectral polynomials $p_k, k=0,\ldots,\ell$, then $S_{\overline{\A}}$ satisfies the spectral condition
with spectral polynomials $\overline{p}_k$ given recursively by
\begin{align}\nonumber
& \overline{p}_0(x)=p_0(x)=1,\\ \label{al:rec}
& \overline{p}_k(x)=p_k(x)+\sum_{m=0}^{k-1}\mu_{k,m}\overline{p}_{m}(x), \quad k\geq 1,
\end{align}
with $\mu_{k,m}=-\lambda_{k,m}\frac{2^{m-k}}{2^k-2^m}$. Here the coefficients $\lambda_{k,m}$ are given by
\begin{equation*}
p_k(2x+1)=2^kp_k(x)+\sum_{m=0}^{k-1}\lambda_{k,m}\overline{p}_m(x).
\end{equation*}
With this at hand, we prove our Lemma recursively: For $k=0$ it is obviously true. Assume that it is true for $k-1$, we prove it for $k$.
We start with the coefficients $\lambda_{k,m}$:
\begin{align*}
&p_k(2x+1)=\frac{(2x+1+\tau)^k}{k!}=
2^k\frac{(x+\tau)^k}{k!}-2^k\frac{(x+\tau)^k}{k!}+2^k\frac{(x+2^{-1}+2^{-1}\tau)^k}{k!}\\
&=2^kp_k(x)-2^k\frac{(x+2^{-1}(3\tau-1)+2^{-1}(1-\tau))^k}{k!}+2^k\frac{(x+2^{-1}(3\tau-1)+1-\tau)^k}{k!}\\
&=2^kp_k(x)+\frac{2^k}{k!}
\sum_{m=0}^k {k\choose m}(x+2^{-1}(3\tau-1))^m\left((1-\tau)^{k-m}-(2^{-1}(1-\tau))^{k-m}\right)\\
&=2^kp_k(x)+2^k
\sum_{m=0}^{k-1} \frac{1}{(k-m)!}\frac{(x+2^{-1}(3\tau-1))^m}{m!}\left(1-2^{m-k}\right)(1-\tau)^{k-m}\\
&=2^kp_k(x)+2^k
\sum_{m=0}^{k-1} \frac{1}{(k-m)!}\overline{p}_m(x)\left(1-2^{m-k}\right)(1-\tau)^{k-m}\\
&=2^kp_k(x)+
\sum_{m=0}^{k-1} \lambda_{k,m}\overline{p}_m(x),
\end{align*}
with
\begin{equation*}
\lambda_{k,m}=\frac{2^k-2^m}{(k-m)!}(1-\tau)^{k-m}.
\end{equation*}
Therefore
\begin{equation*}
\mu_{k,m}=-\frac{2^{m-k}}{(k-m)!}(1-\tau)^{k-m}.
\end{equation*}

Now using the recursive relation (\ref{al:rec}) we obtain
\begin{align*}
\overline{p}_{k}(x)=&\,\frac{(x+\tau)^k}{k!}+\sum_{m=0}^{k-1}\mu_{k,m}\frac{(x+2^{-1}(3\tau-1))^m}{m!}\\
=&\,\frac{(x+2^{-1}(3\tau-1)+2^{-1}(1-\tau))^k}{k!}\\
&-\sum_{m=0}^{k-1}\frac{2^{m-k}}{(k-m)!}(1-\tau)^{k-m}\frac{(x+2^{-1}(3\tau-1))^m}{m!}\\
=&\,\frac{1}{k!}\sum_{m=0}^{k}{k \choose m}(2^{-1}(1-\tau))^{k-m}(x+2^{-1}(3\tau-1))^m\\
&-\frac{1}{k!}\sum_{m=0}^{k-1}2^{m-k}{k\choose m}(1-\tau)^{k-m}(x+2^{-1}(3\tau-1))^m\\
&=\frac{(x+2^{-1}(3\tau-1))^k}{k!},
\end{align*}
which concludes the proof.
\end{proof}
Combining Theorem \ref{thm:main} and Lemma \ref{lem:deRham} we obtain
\begin{cor}
Let $S_{\A}$ be a subdivision operator of order $d$. Let $\ell\geq d$ and let $\tau \in \RR$.
If $S_{\A}$ reproduces $\Pi_{\ell}$ w.r.t.\ $\tau$, then its de Rham transform $S_{\overline{\A}}$ reproduces $\Pi_{\ell}$
w.r.t.\ $2^{-1}(3\tau-1)$.
\end{cor}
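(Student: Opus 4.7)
The plan is to chain Theorem~\ref{thm:main} and Lemma~\ref{lem:deRham}, using the former as the bridge between polynomial reproduction and the shifted-monomial form of the spectral condition, in both directions. Since the corollary is stated explicitly as a combination of these two results, the proof will be essentially their formal composition; no new calculation should be needed.

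Concretely, I would start from the hypothesis that $S_{\A}$ reproduces $\Pi_{\ell}$ with respect to $\tau$. First, I invoke Theorem~\ref{thm:main} in the direction $(\Leftarrow)$ to conclude that $S_{\A}$ satisfies the spectral condition with spectral polynomials $(x+\tau)^k/k!$ for $k=0,\ldots,\ell$. This is exactly the hypothesis required by Lemma~\ref{lem:deRham}, which then yields that the de Rham transform $S_{\overline{\A}}$ satisfies the spectral condition with spectral polynomials $(x+2^{-1}(3\tau-1))^k/k!$ for $k=0,\ldots,\ell$. Finally, I apply Theorem~\ref{thm:main} a second time, now in the direction $(\Rightarrow)$ and with the new parameter $\tau'=2^{-1}(3\tau-1)$: since the spectral polynomials of $S_{\overline{\A}}$ are of the form $(x+\tau')^k/k!$, the theorem gives that $S_{\overline{\A}}$ reproduces $\Pi_{\ell}$ with respect to $\tau'=2^{-1}(3\tau-1)$, which is exactly the claim.

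There is no substantive obstacle in this argument; both ingredients have already been proved above. The only point worth emphasizing is that the shifted-monomial form of the spectral polynomials is essential: it is precisely this form that Theorem~\ref{thm:main} characterizes as equivalent to polynomial reproduction (with the parameter read off the shift), and it is precisely this form that Lemma~\ref{lem:deRham} transports through the de Rham transform (with the shift $\tau$ updated to $2^{-1}(3\tau-1)$). Without the shifted-monomial form in Lemma~\ref{lem:deRham}, one could only conclude a generic spectral condition for $S_{\overline{\A}}$, which, as Remark~\ref{en:1} after Theorem~\ref{thm:main} already points out, does not in general suffice for polynomial reproduction.
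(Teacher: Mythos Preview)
Your proposal is correct and matches the paper's own treatment: the corollary is stated immediately after the phrase ``Combining Theorem~\ref{thm:main} and Lemma~\ref{lem:deRham} we obtain'' with no further proof, and your argument is exactly that composition (Theorem~\ref{thm:main} from reproduction to shifted-monomial spectral condition, Lemma~\ref{lem:deRham} to transport the shift, then Theorem~\ref{thm:main} back).
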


\section{Spectral condition and the special sum rule}\label{sec:sum_rule}
In this section we study the connection between the spectral condition (Definition \ref{def:spectral}) and the special sum rule (Definition \ref{def:sum_rule}). We show that there is actually a difference between the minimal case and the general case.
The results of this section are summarized in Figures \ref{fig:sum_rule_minimal} and \ref{fig:sum_rule_general_case}. In these figures we also added that the spectral condition in general does not imply the reproduction of polynomials, a fact which was shown in \cite{moosmueller18b} and is here discussed in Section \ref{sec:polynomial}.
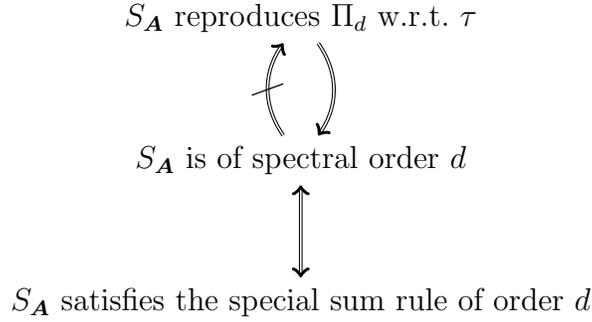
\begin{figure}
\centering
\begin{tikzpicture}
  \matrix (m) [matrix of math nodes,row sep=3em,column sep=4em,minimum width=2em]
  {
    \text{$S_{\A}$ reproduces $\Pi_{d}$ w.r.t.\ $\tau$} \\ 
    \text{$S_{\A}$ is of spectral order $d$} \\
       \text{$S_{\A}$ satisfies the special sum rule of order $d$}  \\};
  \path[-stealth]
    (m-2-1) edge [<-, double, bend right=35] (m-1-1)
    (m-2-1) edge [negated,->,double,bend left=35] (m-1-1)
    (m-2-1) edge [ <->,double] (m-3-1);
\end{tikzpicture}
\caption{This is a summary of results from \cite{conti14,dubuc09,moosmueller18b}, which we discuss in
Sections \ref{sec:polynomial} and \ref{sec:sum_rule}.
Here $S_{\A}$ is an Hermite subdivision operator of order $d$, and $\tau \in \RR$ is 
the parameter with respect to which we parametrize the
Hermite scheme associated to $S_{\A}$.
We consider properties of $S_{\A}$ of minimal order $\ell=d$.}
\label{fig:sum_rule_minimal}
\end{figure}

From \cite{dubuc09} we have the following result:
\begin{lem}\label{lem:minimal}
Let $S_{\A}$ be a subdivision operator of order $d$. Then $S_{\A}$ satisfies the minimal spectral condition if and only if it satisfies the minimal special sum rule.
\end{lem}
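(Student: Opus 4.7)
The plan is to pass both conditions to the Fourier (symbol) side and identify them term by term. I would parameterize candidate spectral polynomials as $p_k(x) = \sum_{s=0}^k q_{ks}\frac{x^s}{s!}$ with $q_{kk}=1$ (the required leading-coefficient normalization), and I would write the formal symbol $\hat{y}(\xi)$ as a power series at $\xi=0$ whose first $d+1$ Taylor coefficients are fixed by $\frac{d^j}{dx^j}\hat{y}(0) = (-1)^j j!\, e_j$ (the sum-rule normalization). The aim is then to show that the coefficients $(q_{ks})_{0\le s\le k\le d}$ and the Taylor jet of $\hat{y}$ at $0$ of order $d$ carry the same linear-algebraic data.

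First I would recast the spectral condition in symbol form. Using $\widehat{S_{\A} \cb}(\xi) = \hat{\A}(\xi)\,\hat{\cb}(2\xi)$, valid formally for polynomial sequences, the eigenvalue equation $S_{\A} \vb_{p_k} = 2^{-k}\vb_{p_k}$ splits by polyphase decomposition into two families, the even-index identities reducing to derivative relations of $\hat{\A}$ at $\xi=0$ and the odd-index identities reducing to derivative relations at $\xi=\pi$. Applying Leibniz, each becomes a linear equation on $(q_{ks})$ whose coefficients involve $\hat{\A}^{(i)}(0)$ or $\hat{\A}^{(i)}(\pi)$. Next, I would expand $\frac{d^j}{dx^j}\bigl(\hat{y}(2\xi)\hat{\A}(\xi)\bigr)$ at $\xi = 0$ and $\xi = \pi$ by the same rule, producing linear combinations of $\hat{\A}^{(i)}(0)$ and $\hat{\A}^{(i)}(\pi)$ whose coefficients now involve the Taylor jet of $\hat{y}$.

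The last step is to exhibit an explicit invertible correspondence between $(q_{ks})$ and the order-$d$ Taylor jet of $\hat{y}$ at $0$ that carries the first system onto the second. This correspondence is essentially the duality pairing between Hermite data and the test sequence $y$: for a polynomial $p$, the value $p(n)$ is recovered as a convolution of $y$ with $\vb_p$ evaluated at $n$. Minimality ($\ell = d$) is essential because the prescribed normalization on $\hat{y}$ fixes exactly $d+1$ row-vector coefficients, matching the $d+1$ leading-coefficient normalizations $q_{kk}=1$, so the two linear systems have the same size and the correspondence is uniquely determined. The main obstacle will be organizing the Leibniz and Taylor bookkeeping so that the equivalence of the two systems is manifestly invertible; once this is set up, the implication in both directions follows immediately.
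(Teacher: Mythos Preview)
The paper does not supply its own proof of this lemma: it is quoted directly from \cite{dubuc09} (introduced as ``From \cite{dubuc09} we have the following result''), with no argument given. There is therefore nothing in the paper to compare your attempt against.

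As to the attempt itself: the overall strategy---pass both conditions to the symbol side and identify the resulting linear constraints on $\hat{\A}^{(i)}(0)$ and $\hat{\A}^{(i)}(\pi)$---is the right one and is essentially how the equivalence is obtained in \cite{dubuc09}. One point in your framing is slightly off, however. You describe the last step as exhibiting an invertible correspondence between the free coefficients $(q_{ks})$ of the spectral polynomials and the order-$d$ Taylor jet of $\hat y$ at $0$. But in the minimal case $\ell=d$ the normalization $\tfrac{(-1)^j}{j!}\hat y^{(j)}(0)=e_j$, $j=0,\dots,d$, already fixes that entire jet, and (by $2\pi$-periodicity of $\hat y$) the conditions at $\xi=\pi$ involve only the same jet. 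So on the sum-rule side there is nothing left to choose; the special sum rule is purely a set of conditions on the mask. The equivalence therefore does not come from matching two families of free parameters of equal size, but from showing that the \emph{existence} of spectral polynomials $(p_k)_{k\le d}$ is equivalent to that fixed set of mask conditions; the lower-order coefficients $q_{ks}$ are then determined a posteriori, consistent with the known uniqueness of spectral polynomials. This does not break your plan, but the bookkeeping has to be organized with this asymmetry in mind rather than as a bijection of parameters.
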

We now put into evidence that this result does not extend to the general case $\ell >d$.
We study two Hermite subdivision schemes from \cite{han05}. 
It is proved there that these scheme satisfy the special sum rule of order $7$.
We show that the spectral condition up to order $2$ is satisfied, but higher spectral conditions are not satisfied:
\begin{ex}
We consider the Hermite scheme with mask $\ab^1$ from \cite[Section 3.1]{han05}. It is supported in $[-2,2]\cap \ZZ$:
\begin{align*}
&\left[ \begin{array}{cc}
               1/128 & 7/256 \\
               0 & 1/16
              \end{array} 
        \right], \,
\left[ \begin{array}{cc}
          1/2 & -1/16 \\
          15/16 & -7/32
\end{array}
        \right], \,
\left[ \begin{array}{cc}
               63/64 & 0 \\
               0 & 3/8
              \end{array}
 \right],\\[0.2cm]
& \left[ \begin{array}{cc}
               1/2 & 1/16 \\
               -15/16 & -7/32
              \end{array}
 \right],
\left[ \begin{array}{cc}
               1/128 & -7/256 \\
               0 & 1/16
              \end{array}
 \right],
\end{align*}
and the Hermite scheme with mask $\ab^2$ from \cite[Section 3.1]{han05}, also supported in $[-2,2]\cap \ZZ$:
\begin{align*}
&\left[ \begin{array}{cc}
               7/96 & -25/1344 \\
               77/384 & -19/384
              \end{array} 
        \right], \,
\left[ \begin{array}{cc}
          1/2 & -5/56 \\
          7/12 & -1/24
\end{array}
        \right], \,
\left[ \begin{array}{cc}
               41/48 & 0 \\
               0 & 19/96
              \end{array}
 \right],\\[0.2cm]
& \left[ \begin{array}{cc}
               1/2 & 5/56 \\
               -7/12 & -1/24
              \end{array}
 \right],
\left[ \begin{array}{cc}
               7/96 & 25/1344 \\
               -77/384 & -19/384
              \end{array}
 \right].
\end{align*}
Note that we had to transpose the masks from \cite{han05}, due to the different notation of subdivision operator
used there.

It is shown in \cite{han05} that both schemes satisfy the special sum rule of order $7$. Furthermore,
it is easy to see that $S_{\ab^1}$ satisfies the spectral condition of order $2$ with spectral polynomials
$1,x,\tfrac{1}{2!}x^2-\tfrac{1}{12}$,
but it does not satisfy the spectral condition of order $3$.
Similarly, $S_{\ab^2}$ satisfies the spectral condition of order $2$ with spectral polynomials $1,x,\tfrac{1}{2!}x^2-\tfrac{1}{21}$, but it does not satisfy the spectral condition of order $3$. Note that we already knew from \cite{dubuc09} (summarized in our Lemma \ref{lem:minimal}) that these schemes satisfy the spectral condition of order $1$.

Theorem \ref{thm:main} further implies that $S_{\ab^1}$ and $S_{\ab^2}$
reproduce $\Pi_1$ with primal parametrization, but they do not reproduce $\Pi_2$ (which has already been noted in \cite{jeong17}, though without proof).

Therefore, similar to the scheme $H_1$ of \cite{jeong17}, these schemes satisfy spectral conditions of higher order than their polynomial reproduction order, see also the discussion in Section \ref{sec:polynomial} and \cite{moosmueller18b}.

With the factorization framework \cite{moosmueller18b}, regularity up to $C^2$ can be proved,
even though from \cite{han05} the scheme $S_{\ab^1}$ is $C^3$ and the scheme $S_{\ab^2}$ is $C^5$. These examples show that the spectral condition is not necessary for convergence, a fact which has also been noted recently in \cite{merrien18}.
\end{ex}
\begin{figure}
\centering
\begin{tikzpicture}
  \matrix (m) [matrix of math nodes,row sep=3em,column sep=4em,minimum width=2em]
  {
    \text{$S_{\A}$ reproduces $\Pi_{\ell}$ w.r.t.\ $\tau$} \\ \text{$S_{\A}$ is of spectral order $\ell$} \\
       \text{$S_{\A}$ satisfies a special sum rule of order $\ell$}   \\};
  \path[-stealth]
    (m-2-1) edge [<-, double, bend right=35] (m-1-1)
    (m-2-1) edge [negated,->,double,bend left=35] (m-1-1)
    (m-2-1) edge [negated, <-,double] (m-3-1);
\end{tikzpicture}
\caption{This is a summary of some of the results presented in Sections \ref{sec:polynomial} and \ref{sec:sum_rule}. 
Here $S_{\A}$ is an Hermite subdivision operator of order $d$, $\ell> d$, and $\tau \in \RR$ is 
the parameter with respect to which we parametrize the
Hermite scheme associated to $S_{\A}$.
}
\label{fig:sum_rule_general_case}
\end{figure}
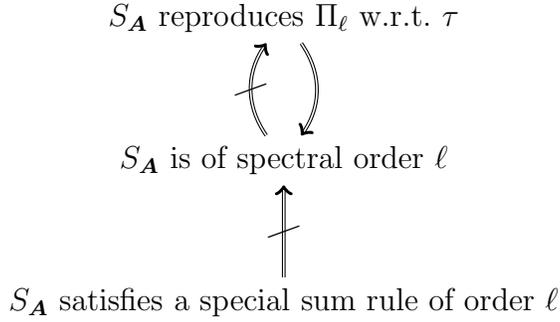
\section{Conclusion}
In this paper we study spectral properties of Hermite subdivision operators. Even though the spectral condition 
of an Hermite subdivision operator is not necessary for the convergence
of the associated scheme \cite{merrien18}, it plays an important role in the factorizability of the operator and
the regularity of the limit \cite{merrien12,moosmueller18b}. We prove that the reproduction of polynomials with respect to
a parameter $\tau$ is equivalent to the spectral condition with shifted monomials of the form $(x+\tau)^k/k!$, extending and 
generalizing a result of \cite{conti14}. We further extend a result of \cite{dubuc09} on polynomial generation, and apply our findings
to the de Rham transform \cite{conti14,dubuc08}.
In the last part of the paper we put into evidence that the special sum rule of order $\ell >d$,
where $d+1$ is the dimension of the mask coefficients, does not imply the spectral condition, even though these notions are known
to be equivalent if $\ell=d$ \cite{dubuc09}.

This paper aims at instigating research on (spectral) properties of Hermite subdivision operators and reproduction/generation properties of
the associated scheme, as there seem to be subtle differences between these notions. We would thus like to conclude this paper with three open questions:
\begin{enumerate}
 \item Does generation of polynomials imply the spectral condition?
 \item Does the spectral condition of order $\ell > d$ imply the special sum rule of order $\ell$?
 \item How is the special sum rule connected to polynomial reproduction in the case $\ell >d$?
\end{enumerate}

\section*{Acknowledgments}
The author thanks Svenja H\"uning for her valuable comments and suggestions and
the Department of Chemical and Biological Engineering, Princeton
University, for their hospitality.

\bibliographystyle{plain}

\end{document}